\documentclass[12pt,a4paper]{amsart}

\usepackage{amsmath,amsfonts,amssymb,mathtools,extarrows}
\usepackage{xcolor}

\usepackage{color}
\usepackage{graphicx}

\usepackage{enumerate}  

\usepackage{cancel}

\usepackage[hmargin=2cm,vmargin=2cm]{geometry}

\usepackage[hypertexnames=false,hyperfootnotes=false,colorlinks=true,linkcolor=blue,%
citecolor=purple,filecolor=magenta,urlcolor=cyan,unicode,linktocpage=true,pagebackref=false]{hyperref}
\usepackage{nameref,zref-xr}     

\usepackage{tikz}

\hyphenation{CohFTs}
\hyphenation{CohFT  }

\newcommand{\mbZ}{\mathbb Z}
\newcommand{\mbC}{\mathbb C}

\newcommand{\oM}{\overline{\mathcal M}}

\newcommand{\og}{\overline g}
\newcommand{\oh}{\overline h}
\newcommand{\hLambda}{\widehat\Lambda}

\def\oM{{\overline{\mathcal{M}}}}
\def\CP{{{\mathbb C}{\mathbb P}}}
\renewcommand{\Im}{\mathrm{Im}}
\def\mbQ{{\mathbb Q}}
\def\d{{\partial}}

\newcommand{\<}{\left<}
\renewcommand{\>}{\right>}
\newcommand{\eps}{\varepsilon}

\newcommand{\hcA}{\widehat{\mathcal A}}
\newcommand{\DR}{\mathrm{DR}}

\newcommand{\Coef}{\mathrm{Coef}}

\renewcommand{\top}{\mathrm{top}}

\newcommand{\st}{\mathbf{H}}

\newcommand{\mcF}{\mathcal{F}}

\newcommand{\of}{\overline{f}}

\DeclareMathOperator{\res}{{res}}

\newcommand{\oH}{\overline{H}}

\newcommand{\ev}{\mathrm{ev}}

\newcommand{\cL}{\mathcal{L}}

\renewcommand{\st}{\mathrm{st}}

\newcommand{\virt}{\mathrm{virt}}
\newcommand{\od}{\overline{d}}
\newcommand{\cE}{\mathcal{E}}
\newcommand{\mbE}{\mathbb{E}}
\newcommand{\hcB}{\widehat{\mathcal{B}}}
\newcommand{\tphi}{\widetilde{\phi}}
\newcommand{\Ker}{\mathrm{Ker}}


\newtheorem{theorem}{Theorem}[section]

\newtheorem{lemma}[theorem]{Lemma}

\theoremstyle{remark}

\newtheorem{remark}[theorem]{Remark}

\theoremstyle{definition}

\usepackage{color}

\numberwithin{equation}{section}

\pagestyle{myheadings}

\begin{document}

\title[Quantum intersection numbers and the Gromov--Witten invariants of $\CP^1$]{Quantum intersection numbers and the Gromov--Witten invariants of $\CP^1$}

\author{Xavier Blot}
\address{X. Blot:\newline
Korteweg--de Vries Institute for Mathematics, University of Amsterdam, Postbus 94248, 1090GE Amsterdam, The Netherlands}
\email{xavierblot1@gmail.com}

\author{Alexandr Buryak}
\address{A. Buryak:\newline 
Faculty of Mathematics, National Research University Higher School of Economics, Usacheva str. 6, Moscow, 119048, Russian Federation;\smallskip\newline 
Igor Krichever Center for Advanced Studies, Skolkovo Institute of Science and Technology, Bolshoy Boulevard 30, bld. 1, Moscow, 121205, Russian Federation}
\email[corresponding author]{aburyak@hse.ru}

\begin{abstract}
The notion of a quantum tau-function for a natural quantization of the KdV hierarchy was introduced in a work of Dubrovin, Gu\'er\'e, Rossi, and the second author. A certain natural choice of a quantum tau-function was then described by the first author, the coefficients of the logarithm of this series are called the quantum intersection numbers. Because of the Kontsevich--Witten theorem, a part of the quantum intersection numbers coincides with the classical intersection numbers of psi-classes on the moduli spaces of stable algebraic curves. In this paper, we relate the quantum intersection numbers to the stationary relative Gromov--Witten invariants of $(\CP^1,0,\infty)$ with an insertion of a Hodge class. Using the Okounkov--Pandharipande approach to such invariants (with the trivial Hodge class) through the infinite wedge formalism, we then give a short proof of an explicit formula for the ``purely quantum'' part of the quantum intersection numbers, found by the first author, which in particular relates these numbers to the one-part double Hurwitz numbers.
\end{abstract}

\date{\today}

\maketitle

\section{Introduction}

The starting point of our considerations is Witten's conjecture~\cite{Wit91}, proved by Kontsevich~\cite{Kon92}, which opened a new direction of research relating the topology of the moduli space~$\oM_{g,n}$ of stable algebraic curves of genus $g$ with $n$ marked points to the theory of integrable systems. Denote by $\psi_i\in H^2(\oM_{g,n},\mbQ)$ the first Chern class of the line bundle $\cL_i$ over~$\oM_{g,n}$ formed by the cotangent lines at the $i$-th marked point on stable curves. The classes~$\psi_i$ are called the {\it psi-classes}. The \emph{intersection numbers} on $\oM_{g,n}$ are defined by
$$
\<\tau_{d_1}\tau_{d_2}\ldots\tau_{d_n}\>_g:=\int_{\oM_{g,n}}\psi_1^{d_1}\psi_2^{d_2}\cdots\psi_n^{d_n}\in\mbQ,\quad d_1,\ldots,d_n\in\mbZ_{\ge 0},
$$ 
with the convention that the integral is zero if $\sum d_i\ne\dim\oM_{g,n}=3g-3+n$. Consider the generating series of intersection numbers
$$
\mcF(t_0,t_1,\ldots,\eps):=\sum_{g,n\ge 0}\eps^{2g}\sum_{d_1,\ldots,d_n\ge 0}\<\tau_{d_1}\tau_{d_2}\ldots\tau_{d_n}\>_g\frac{t_{d_1}t_{d_2}\ldots t_{d_n}}{n!}\in\mbQ[[t_0,t_1,\ldots,\eps]].
$$
Witten's conjecture~\cite{Wit91}, proved by Kontsevich~\cite{Kon92}, states that $u^\top:=\frac{\d^2\mcF}{\d t_0^2}$ is a solution of the Korteweg--de Vries (KdV) hierarchy (we identify $x=t_0$)
\begin{align*}
\frac{\d u}{\d t_1}&=u u_x+\frac{\eps^2}{12}u_{xxx},\\
\frac{\d u}{\d t_2}&=\frac{u^2 u_x}{2}+\eps^2\left(\frac{u u_{xxx}}{12}+\frac{u_xu_{xx}}{6}\right)+\eps^4\frac{u_{xxxxx}}{240},\\
&\hspace{2mm}\vdots
\end{align*}
Note that the required solution of the hierarchy is specified by the initial condition $u^\top|_{t_{\ge 1}=0}=x$. The generating series $\mcF$ can be uniquely reconstructed from $u^\top=\frac{\d^2\mcF}{\d t_0^2}$ using the string equation
$$
\frac{\d\mcF}{\d t_0}=\sum_{k\ge 0}t_{k+1}\frac{\d\mcF}{\d t_k}+\frac{t_0^2}{2}.
$$

\medskip

The KdV hierarchy is Hamiltonian,
$$
\frac{\d u}{\d t_p}=\{u,\oh_p\},
$$
with
\begin{align*}
\oh_1&=\int\left(\frac{u^3}{6}+\frac{\eps^2}{24}u u_{xx}\right)dx,\\
\oh_2&=\int\left(\frac{u^4}{24}+\eps^2\frac{u^2 u_{xx}}{48}+\eps^4\frac{u u_{xxxx}}{480}\right)dx,\\
&\hspace{2mm}\vdots
\end{align*}
and the Poisson bracket $\{\cdot,\cdot\}$ on the space of local functionals given by $\{\oh,\og\}=\int\frac{\delta\oh}{\delta u}\d_x\frac{\delta\og}{\delta u}dx$. Using the Hamiltonians~$\oh_d$, the statement of the Kontsevich--Witten theorem can be equivalently written as
$$
\eps^{2g}\<\tau_0\tau_{d_1}\cdots\tau_{d_n}\>_g=\left.\left\{\left\{\ldots\left\{\left\{\frac{\delta\oh_{d_1}}{\delta u},\oh_{d_2}\right\},\oh_{d_3}\right\},\ldots\right\},\oh_{d_n}\right\}\right|_{u_k=\delta_{k,1}},
$$
where we denote $u_k:=\d_x^k u$.

\medskip

After the substitution $u=\sum_{n\in\mbZ}p_n e^{inx}$ (and therefore $u_k=\sum_{n\in\mbZ}(in)^k p_n e^{inx}$), the Hamiltonians $\oh_d$ can be considered as elements of the algebra 
$$
\hcB:=\mbC[p_1,p_2,\ldots][[p_0,p_{-1},\ldots,\eps]],
$$
with the Poisson bracket given by $\{p_a,p_b\}:=ia\delta_{a+b,0}$. The Poisson algebra $(\hcB,\{\cdot,\cdot\})$ admits a standard deformation quantization $(\hcB[[\hbar]],\star)$, where $\star$ is the Moyal product with the commutation relation $[p_a,p_b]=p_a\star p_b-p_b\star p_a=ia\hbar\delta_{a+b,0}$. In~\cite{BR16}, the authors quantized the KdV hierarchy constructing pairwise commuting elements $\oH_d\in\hcB[[\hbar]]$, called the \emph{quantum Hamiltonians}, satisfying $\oH_d=\oh_d+O(\hbar)$. Moreover, the elements $\oH_d$ have the form $\oH_d=\int H_d dx$, where $H_d$ is a polynomial in $u,u_1,\ldots,\eps,\hbar$:
\begin{align*}
\oH_1&=\int\left(\frac{u^3}{6}+\frac{\eps^2}{24}u u_{xx}-\frac{i\hbar}{24}u\right)dx,\\
\oH_2&=\int\left(\frac{u^4}{24}+\eps^2\frac{u^2 u_{xx}}{48}+\eps^4\frac{u u_{xxxx}}{480}-i\hbar\frac{2uu_{xx}+u^2}{48}-i\hbar\eps^2\frac{u}{2880}\right)dx,\\
&\hspace{2mm}\vdots
\end{align*}

\medskip

By a construction from~\cite{BDGR20}, which was made precise in~\cite{Blo22}, one can associate to the collection of quantum Hamiltonians $\oH_d$ a quantum tau-function $\exp(\mcF^{(q)})$, where $\mcF^{(q)}\in\mbC[[t_0,t_1,\ldots,\eps,\hbar]]$ is uniquely determined by the relations:
\begin{align}
&\left.\frac{\d^{n+1}\mcF^{(q)}}{\d t_0\d t_{d_1}\ldots\d t_{d_n}}\right|_{t_*=0}=\hbar^{1-n}\left.\left[\left[\ldots\left[\left[\frac{\delta\oH_{d_1}}{\delta u},\oH_{d_2}\right],\oH_{d_3}\right],\ldots\right],\oH_{d_n}\right]\right|_{u_k=\delta_{k,1}},\label{eq:construction1}\\
&\frac{\d\mcF^{(q)}}{\d t_0}=\sum_{i\ge 0}t_{i+1}\frac{\d\mcF^{(q)}}{\d t_i}+\frac{t_0^2}{2}-\frac{i\hbar}{24},\label{eq:construction2}
\end{align}
together with the explicit formula for the constant term $\left.\mcF^{(q)}\right|_{t_*=0}=-\frac{i}{5760}\eps^2\hbar$. Note that $\left.\mcF^{(q)}\right|_{\hbar=0}=\mcF$. Equation~\eqref{eq:construction2} is called the \emph{quantum string equation}. Note that it is a close analog of the classical string equation for the formal power series~$\mcF$.

\medskip

\emph{Quantum intersection numbers} $\<\tau_{d_1}\ldots\tau_{d_n}\>_{l,g-l}$ are defined by
$$
\<\tau_{d_1}\ldots\tau_{d_n}\>_{l,g-l}:=i^{\sum d_j-3g-n+3}\Coef_{\eps^{2l}\hbar^{g-l}}\left.\frac{\d^n\mcF^{(q)}}{\d t_{d_1}\ldots\d t_{d_n}}\right|_{t_*=0}.
$$
Note that they are nonzero only if $2g-2+n>0$.

\medskip

\begin{remark}
Our normalization of the quantum intersection numbers is different from the one from the paper~\cite{Blo22}: the coefficient $i^{\sum d_j-3g-n+3}$ is replaced by $i^{g-l}$ there. In~\cite{Blo22}, the author proved that his normalization gives a rational number. The two normalizations are different by the multiplication by $i^{\sum d_j-(n-l-3)}$. In~\cite{Blo22}, the author proved that a quantum intersection number vanishes unless $\sum d_j=n-l+1\text{ mod 2}$. So both normalizations give rational numbers, which coincide or differ by sign. We prefer our normalization, because then there is no sign difference in the formula relating the quantum intersection numbers with the relative Gromov--Witten invariants of $\CP^1$ in the theorem below.
\end{remark}

\medskip

Denote $S(z):=\frac{e^{z/2}-e^{-z/2}}{z}\in\mbQ[[z]]$. In~\cite{Blo22}, the first author proved that
\begin{gather}\label{eq:main formula}
\sum_{d_1,\ldots,d_n\ge 0}\<\tau_{d_1}\ldots\tau_{d_n}\>_{0,g}\mu_1^{d_1}\cdots\mu_n^{d_n}=\left(\sum\mu_j\right)^{2g-3+n}\Coef_{z^{2g}}\left(\frac{\prod_{j=1}^n S(\mu_j z)}{S(z)}\right),
\end{gather}
for $g\ge 0$ and $n\ge 1+2\delta_{g,0}$.

\medskip

\begin{remark}
In~\cite{Blo22}, this theorem was interpreted in the following way. For two tuples $\mu=(\mu_1,\ldots,\mu_k)\in\mbZ_{\ge 1}^k$ and $\nu=(\nu_1,\ldots,\nu_m)\in\mbZ_{\ge 1}^m$, $k,m\ge 1$, with $\sum\mu_i=\sum\nu_j$, denote by~$H^g_{\mu,\nu}$ the double Hurwitz number (we assume that all the points in the preimages of $0$ and~$\infty$ in the ramified coverings that we count are marked). In~\cite{GJV05}, the authors proved that
$$
H^g_{\sum\mu_j,(\mu_1,\ldots,\mu_n)}=r!\left(\sum\mu_j\right)^{r-1}\Coef_{z^{2g}}\left(\frac{\prod_{j=1}^n S(\mu_j z)}{S(z)}\right),\quad n\ge 1,
$$
where $r:=2g-1+n$. Therefore, formula~\eqref{eq:main formula} can be equivalently written as
$$
\<\tau_{d_1}\ldots\tau_{d_n}\>_{0,g}=\Coef_{\mu_1^{d_1}\cdots\mu_n^{d_n}}\left(\frac{H^g_{\sum\mu_j,(\mu_1,\ldots,\mu_n)}}{r!\sum\mu_j}\right),\quad g\ge 0,\quad n\ge 1+2\delta_{g,0}.
$$
\end{remark}

\medskip

In our paper, we relate the quantum intersection numbers $\<\tau_0\tau_{d_1}\ldots\tau_{d_n}\>_{l,g-l}$ to the stationary relative Gromov--Witten invariants of $(\CP^1,0,\infty)$ with an insertion of a Hodge class. For two tuples $\mu=(\mu_1,\ldots,\mu_k)\in\mbZ_{\ge 1}^k$ and $\nu=(\nu_1,\ldots,\nu_m)\in\mbZ_{\ge 1}^m$, $k,m\ge 0$, with $\sum\mu_i=\sum\nu_j$ (we allow the case $\mu=\nu=\emptyset$) denote by $\oM_{g,n}^{\circ}(\CP^1,\mu,\nu)$ the moduli space of stable relative maps from genus $g$, $n$-pointed connected curves to $(\CP^1,0,\infty)$, with ramification profiles over~$0$ and~$\infty$ given by the tuples $\mu$ and $\nu$, respectively. In our definition, we label the points in each ramification profile. This moduli space was defined in the algebro-geometric setting in \cite{Li01}. 

\medskip

The moduli space~$\oM_{g,n}^\circ(\CP^1,\mu,\nu)$ is endowed with
\begin{itemize}
\item \emph{psi-classes} $\psi_i\in H^2(\oM_{g,n}^\circ(\CP^1,\mu,\nu),\mbQ)$, $1\le i\le n$, that are defined as the first Chern classes of the cotangent line bundles over $\oM_{g,n}^\circ(\CP^1,\mu,\nu)$;

\smallskip

\item \emph{evaluation maps} $\ev_i\colon \oM_{g,n}(\CP^1,\mu,\nu)\to\CP^1$, $1\le i\le n$;

\smallskip

\item \emph{Hodge classes} $\lambda_i:=c_i(\mbE)\in H^{2i}(\oM_{g,n}(\CP^1,\mu,\nu),\mbQ)$, where $\mbE$ is the rank $g$ vector bundle over $\oM_{g,n}(\CP^1,\mu,\nu)$ whose fibers are the spaces of holomorphic differentials on nodal curves;

\smallskip

\item \emph{virtual fundamental class} $[\oM_{g,n}(\CP^1,\mu,\nu)]^\virt\in H_{2(2g-2+k+m+n)}(\oM_{g,n}(\CP^1,\mu,\nu),\mbQ)$.
\end{itemize}

\medskip

Let $\omega\in H^{2}\left(\mathbb{CP}^{1},\mbQ\right)$ be the Poincar\'e dual of a point. We denote by
\begin{equation}
\left\langle \mu,\lambda_{l}\prod_{i=1}^{n}\tau_{d_{i}}(\omega),\nu\right\rangle _{g}^{\circ},\quad g,n,l\ge0,\quad d_{1},\ldots,d_{n}\ge0,\label{eq: connected Hodge GW invariants}
\end{equation}
the invariant given by
\[
\int_{\left[\oM_{g,n}^{\circ}(\CP^{1},\mu,\nu)\right]^{\virt}}\lambda_{l}\prod_{j=1}^{n}\psi_{j}^{d_{j}}\ev_{j}^{*}(\omega).
\]
Note that the invariant (\ref{eq: connected Hodge GW invariants}) is zero unless $2g-2+l(\mu)+l(\nu)=\sum d_{i}+l$. Therefore, the genus can be omitted in the notation. 

\medskip

Let $\oM_{g,n}^{\bullet}(\CP^{1},\mu,\nu)$ be the moduli space of stable relative maps with possibly disconnected domains. The brackets $\left\langle \,\,\right\rangle ^{\bullet}$ will be used for the integration over the moduli space of stable relative maps with possibly disconnected domains. 

\medskip

\begin{theorem}
\label{theorem:quantum and GW}
Let $g,l\ge 0$, $n\ge1$, and $\od=(d_{1},\ldots,d_{n})\in\mathbb{Z}_{\ge0}^{n}$.
\begin{enumerate}
\item For any $k\ge 1$, the integral
\[
P_{g,l,\od}(a_{1},\ldots,a_{k}):=\left\langle A,\lambda_{l}\prod_{j=1}^{n}\tau_{d_{i}}(\omega),(a_{1},\ldots,a_{k})\right\rangle ^{\circ},\quad a_{1},\ldots,a_{k}\in\mathbb{Z}_{\ge1},\quad A=\sum a_i,
\]
is a polynomial in $a_{1},\ldots,a_{k}$ of degree $2g+n-1$ with the parity of $n-1$. 
 
\smallskip 
 
\item Let $k:=\sum d_{j}-2g+l+1$. Then
\[
\left\langle \tau_{0}\tau_{d_{1}}\ldots\tau_{d_{n}}\right\rangle {}_{l,g-l}=\begin{cases}
\frac{1}{k!}\Coef_{a_{1}\cdots a_{k}}P_{g,l,\od}, & \text{if \ensuremath{k\ge1}},\\
(-1)^{g}\int_{\oM_{g,2}}\lambda_{g}\lambda_{l}\psi_{1}^{d_{1}}, & \text{if \ensuremath{k=0} and \ensuremath{n=1}},\\
0, & \text{otherwise}.
\end{cases}
\]
\end{enumerate}
\end{theorem}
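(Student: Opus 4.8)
The plan is to identify both sides with generating series built from the quantum Hamiltonians $\oH_d$ and then to reduce the right-hand side, via the standard operator formalism, to the infinite wedge computation of stationary relative Gromov--Witten invariants. First I would use the defining relations \eqref{eq:construction1}--\eqref{eq:construction2} to write $\<\tau_0\tau_{d_1}\ldots\tau_{d_n}\>_{l,g-l}$ as a (normalized) iterated commutator $\hbar^{1-n}[[\ldots[\frac{\delta\oH_{d_1}}{\delta u},\oH_{d_2}],\ldots],\oH_{d_n}]$ evaluated at $u_k=\delta_{k,1}$, extracting the coefficient of $\eps^{2l}\hbar^{g-l}$. After the substitution $u=\sum_n p_n e^{inx}$ this lives in the Moyal-quantized algebra $\hcB[[\hbar]]$, where the $p_n$ act as creation/annihilation operators on a Fock space. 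The point is that the $\oH_d$ become, up to the known Hodge-insertion correction, the operators whose vacuum matrix elements compute $\langle\cdot\mid\prod e^{\oH_{d_j}}\mid\cdot\rangle$; concretely one expands each commutator and each $\frac{\delta\oH_{d_j}}{\delta u}$ in the $p_n$, so that the whole expression becomes a sum over ways of pairing the modes, i.e. over ramification data, which is exactly the shape of the Okounkov--Pandharipande expression for relative invariants of $(\CP^1,0,\infty)$ in the infinite wedge.

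Next I would set up the right-hand side. Part (1) — polynomiality of $P_{g,l,\od}(a_1,\ldots,a_k)$ in $a_1,\ldots,a_k$ of degree $2g+n-1$ with parity $n-1$ — should follow from the operator formula for stationary relative GW invariants with a Hodge insertion: expressing $\langle A,\lambda_l\prod\tau_{d_i}(\omega),(a_1,\ldots,a_k)\rangle^\circ$ through the infinite wedge, the dependence on each $a_j$ enters only through the operator $\alpha_{-a_j}$ creating the corresponding part over $\infty$, and the $\lambda_l$-insertion multiplies the integrand by the appropriate component of the Hodge class, which in the $\mathcal E(z)$-formalism contributes a factor that is polynomial in the parts. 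The degree and parity count is then a dimension/selection-rule bookkeeping: the virtual dimension constraint $2g-2+l(\mu)+l(\nu)=\sum d_i+l$ with $l(\mu)=1$, $l(\nu)=k$ fixes the total $a$-degree, and the $S(z)$-type generating functions that appear are even in $z$, forcing the stated parity. I would borrow the relevant form of the Okounkov--Pandharipande computation (with the trivial Hodge class) and graft the $\lambda_l$ factor onto it, as the excerpt's abstract promises.

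Then comes the identification. Having both sides as explicit sums over the same combinatorial data, I would match them term by term: the connected one-part relative invariant over $0$ with profile $A=\sum a_i$ corresponds, on the quantum side, to the single ``outgoing'' mode produced by $\frac{\delta\oH_{d_1}}{\delta u}$ (this is the asymmetry that forces the special role of $\tau_0$ and of the first index $d_1$), while the $k$ parts over $\infty$ correspond to the $k$ extracted modes, giving the coefficient extraction $\frac{1}{k!}\Coef_{a_1\cdots a_k}$; the factor $i^{\sum d_j-3g-n+3}$ in the normalization of the quantum intersection numbers is precisely what absorbs the powers of $i$ coming from the commutation relation $[p_a,p_b]=ia\hbar\delta_{a+b,0}$ and from the $i$'s in the explicit $\oH_d$, which is why the theorem is sign-free. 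The boundary cases $k=0$ are separate: when $k=0$ and $n=1$ the relative invariant degenerates to an integral over $\oM_{g,2}$, and the factor $(-1)^g\lambda_g$ appears from the localization/degeneration that relates the (empty-profile-over-$\infty$) relative space to the usual moduli space — this is the $\lambda_g$-type evaluation and I would cite the standard Hodge integral identity for it; when $k=0$ and $n\ge 2$, or more generally when the selection rule cannot be met, both sides vanish.

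The main obstacle I anticipate is the precise dictionary between the quantum Hamiltonians $\oH_d$ (with their $\hbar$- and $\eps$-corrections, including the Hodge-class shift) and the operators on the infinite wedge whose correlators are the relative GW invariants: matching the $\eps$-expansion with the genus, the $\hbar$-expansion with the Hodge degree $l$ versus the ``purely quantum'' degree $g-l$, and keeping track of all the $i$'s so that the normalization $i^{\sum d_j-3g-n+3}$ comes out exactly right. Once that correspondence is nailed down — essentially a careful comparison of two known presentations — the rest is the bookkeeping of coefficient extraction and the dimension/parity count, and the degenerate cases handled by standard Hodge-integral facts.
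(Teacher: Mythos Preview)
Your proposal has the right starting point---expressing $\langle\tau_0\tau_{d_1}\ldots\tau_{d_n}\rangle_{l,g-l}$ via the iterated commutator of quantum Hamiltonians evaluated at $u_k=\delta_{k,1}$---but the mechanism you propose for the identification is not the one the paper uses and, as stated, does not work. You plan to match both sides through the Okounkov--Pandharipande infinite wedge formalism, ``grafting the $\lambda_l$ factor'' onto their operator formula. The problem is that the formula of \cite{OP06} computes only stationary relative invariants with \emph{trivial} Hodge class; there is no available infinite wedge expression for $\langle A,\lambda_l\prod\tau_{d_i}(\omega),(a_1,\ldots,a_k)\rangle^\circ$ when $l>0$, so neither polynomiality nor the coefficient identity can be read off from the wedge side. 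In the paper the infinite wedge appears only in Section~\ref{sec: proof Hurwitz}, and only for $l=0$, to deduce formula~\eqref{eq:main formula} \emph{from} Theorem~\ref{theorem:quantum and GW}---not the other way around.

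The paper's actual route is geometric. The key identity~\eqref{eq: GW and DR} expresses the restriction $|_{p_{\le 0}=0}$ of the iterated commutator (equivalently, of the iterated $\tilde\star$-product $f\tilde\star g:=f\star g-fg$) directly as the generating series of connected relative invariants with a $\lambda_l$-insertion. This is proved by induction on $n$. The base case is Lemma~\ref{lem: Identification GW and DR}, which identifies a single DR-cycle integral $\int_{\oM_{g,n+1}}\DR_g(0,b_1,\ldots,b_n)\psi_1^d\lambda_l$ with a one-pointed relative invariant via the rigidification of \cite{MP06} and the projection formula; this is what links the building blocks of $\oH_d$ to Gromov--Witten theory. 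The inductive step is Li's degeneration formula \cite{Li02}: one degenerates the target $\CP^1$ so as to split off the insertion $\tau_{d_n}(\omega)$, carefully cancels the disconnected contributions on both sides, and observes that the resulting connected gluing sum~\eqref{eq: degeneration connected} matches term by term the combinatorics of taking the $\tilde\star$-product with $\oH_{d_n}$. Polynomiality (Part~1) is then deduced \emph{from the commutator side}, not the GW side: once~\eqref{eq: GW and DR} is in hand, the polynomial dependence, degree, and parity follow from the polynomiality of DR-cycle integrals via the analysis of \cite[Section~6, Lemma~6.3]{Blo22}. The $k=0$ cases fall out of~\eqref{eq: GW and DR} directly, using $\DR_g(0,0)=(-1)^g\lambda_g$ for $n=1$ and a simple vanishing argument for $n\ge 2$. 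Your sketch contains none of these ingredients (the $\tilde\star$-product reduction, Lemma~\ref{lem: Identification GW and DR}, the degeneration formula), and the substitute you offer---a direct operator dictionary with Hodge insertion---is not available.
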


\medskip

In the case $l=0$, the invariant~\eqref{eq: connected Hodge GW invariants} is a classical stationary relative Gromov--Witten invariant of $\CP^1$, for which Okounkov and Pandharipande~\cite{OP06} found an explicit formula using the infinite wedge formalism. Combining this formula with our theorem, we give a new, much shorter, proof of formula~\eqref{eq:main formula}.

\medskip

\subsection{Plan of the paper}

In Section~\ref{sec: quantum numbers}, we recall the necessary background on quantum intersection numbers. In Section~\ref{sec: proof main}, we prove Theorem~\ref{theorem:quantum and GW}. Finally, in Section~\ref{sec: proof Hurwitz} we give a new proof of formula~(\ref{eq:main formula}) using Theorem~\ref{theorem:quantum and GW}.

\medskip

\subsection{Notations and conventions}

\begin{itemize}

\item For $n\in\mbZ_{\ge 0}$, we denote $[n]:=\{1,\ldots,n\}$.

\smallskip

\item Given a tuple of numbers $(a_1,\ldots,a_n)$, we denote by the capital letter $A$ the sum $A:=\sum_{i=1}^n a_i$. For $I\subset[n]$, we denote $A_I:=\sum_{i\in I}a_i$, $I^c:=[n]\backslash I$, and $a_I:=(a_{i_1},\ldots,a_{i_{|I|}})$, where $\{i_1,\ldots,i_{|I|}\}$, $i_1<\ldots<i_{|I|}$.

\smallskip

\item For a topological space $X$, we denote by $H^i(X)$ the cohomology groups with the coefficients in $\mbQ$. 

\smallskip

\item The moduli space $\oM_{g,n}$ is empty unless the condition $2g-2+n>0$ is satisfied. We will often omit mentioning this condition explicitly, and silently assume that it is satisfied when a moduli space is considered. 

\end{itemize}

\medskip

\subsection{Acknowledgments}
This work started at the conference Geometry and Integrability in Moscow in September 2023. 

\medskip

X. B. was supported by the Netherlands Organization for Scientific Research. The work of A.~B. is an output of a research project implemented as part of the Basic Research Program at the National Research University Higher School of Economics (HSE University). 

\medskip


\section{Quantum intersection numbers}\label{sec: quantum numbers}

\subsection{Hamiltonian structure of the KdV hierarchy}

Let us briefly recall main notions and notations in the formal theory of evolutionary PDEs with one spatial variable (and with one dependent variable):
\begin{itemize}
\item We consider formal variables $u_d$ with $d\ge 0$ and introduce the algebra of \emph{differential polynomials} $\hcA:=\mbC[[u_0]][u_{\ge 1}][[\eps]]$. We denote $u:=u_0$, $u_x:=u_1$, $u_{xx}:=u_2$, \ldots. Denote by $\hcA_d\subset\hcA$ the homogeneous component of (differential) degree $d$, where $\deg u_i:=i$ and $\deg\eps:=-1$. 

\smallskip

\item An operator $\d_x\colon\hcA\to\hcA$ is defined by $\d_x:=\sum_{d\ge 0}u_{d+1}\frac{\d}{\d u_d}$.

\smallskip

\item The operator of \emph{variational derivative} $\frac{\delta}{\delta u}\colon\hcA\to\hcA$ is defined by $\frac{\delta}{\delta u}:=\sum_{i\ge 0}(-\d_x)^i\circ\frac{\d}{\d u_i}$.

\smallskip

\item The space of \emph{local functionals} is defined by $\hLambda:=\hcA/(\Im(\d_x)\oplus\mbC[[\eps]])$. The image of $f\in\hcA$ under the canonical projection $\hcA\to\hLambda$ is denoted by $\int f dx$. The grading on $\hcA$ induces a grading on $\hLambda$. We denote by $\hLambda_d\subset\hLambda$ the homogeneous component of degree~$d$.

\smallskip

\item The kernel of the operator $\frac{\delta}{\delta u}\colon\hcA\to\hcA$ is equal to $\Im(\d_x)\oplus\mbC[[\eps]]$, so the operator $\frac{\delta}{\delta u}\colon\hLambda\to\hcA$ is well defined.

\smallskip

\item The space $\hLambda$ is endowed with a Poisson bracket given by $\{\oh,\og\}:=\int\frac{\delta\oh}{\delta u}\d_x\frac{\delta\og}{\delta u} dx$, where $\oh,\og\in\hLambda$. This bracket has a lifting to a bracket $\{\cdot,\cdot\}\colon\hcA\times\hLambda\to\hcA$ defined by $\{f,\og\}:=\sum_{n\ge 0}\frac{\d f}{\d u_n}\d_x^{n+1}\frac{\delta\og}{\delta u}$.
\end{itemize}

\medskip

The KdV hierarchy is Hamiltonian, $\frac{\d u}{\d t_n}=\{u,\oh_n\}$, where the Hamiltonians $\oh_n$ can be described using the Lax formalism: $\oh_d=\frac{\eps^{2d+4}}{(2d+3)!!}\int\res L^{\frac{2d+3}{2}}dx$, where $L=\d_x^2+2\eps^{-2}u$.

\medskip

As we already discussed in the introduction, the Hamiltonians of the KdV hierarchy can be considered as elements of the other space $\hcB=\mbC[p_1,p_2,\ldots][[p_0,p_{-1},\ldots,\eps]]$. For this, we define a linear map $\phi\colon\hcA\to\hcB[[e^{ix},e^{-ix}]]$ by
$$
\phi(f):=f\big|_{u_d\mapsto\sum_{a\in\mbZ}(ia)^d p_a e^{iax}}\in\hcB[[e^{ix},e^{-ix}]],\quad f\in\hcA.
$$
The map $\phi$ is an injection. Consider the decomposition 
$$
\phi(f)=\sum_{a\in\mbZ}\phi_a(f)e^{iax},\quad \phi_a(f)\in\hcB.
$$
Denote $\tphi_0(f):=\phi_0(f)-\left.\phi_0(f)\right|_{p_*=0}$. We have $\Im(\d_x)\oplus\mbC[[\eps]]\subset\Ker(\tphi_0)$. Therefore, the map $\tphi_0\colon\hLambda\to\hcB$ is well defined. This linear map is injective, and moreover it is compatible with the Poisson structures on $\hLambda$ and $\hcB$,
$$
\tphi_0\left(\{\of,\oh\}\right)=\left\{\tphi_0(\of),\tphi_0(\oh)\right\},\quad \of,\oh\in\hLambda,
$$
where the Poisson structure on $\hcB$ is defined by
$$
\{\of,\oh\}:=\sum_{a\in\mbZ}ia\frac{\d\of}{\d p_a}\frac{\d\oh}{\d p_{-a}},\quad \of,\oh\in\hcB.
$$
Abusing notation, we will denote a KdV Hamiltonian $\oh_d\in\hLambda$ and its image $\tphi_0(\oh_d)\in\hcB$ by the same letter $\oh_d$. 

\medskip

\subsection{The Hamiltonians of the KdV hierarchy and the double ramification cycles}

Consider an $n$-tuple of integers $(a_1,\ldots,a_n)$ such that $\sum a_i=0$. Let us briefly recall the definition of the \emph{double ramification (DR) cycle} $\DR_g(a_1,\ldots,a_n)\in H^{2g}(\oM_{g,n})$. The positive parts of $(a_1,\ldots,a_n)$ define a partition $\mu=(\mu_1,\ldots,\mu_{l(\mu)})$. The negatives of the negative parts of $(a_1,\ldots,a_n)$ define a second partition $\nu=(\nu_1,\ldots,\nu_{l(\nu)})$. Since the parts of $(a_1,\ldots,a_n)$ sum to~$0$, we have $|\mu|=|\nu|$. We allow the case $|\mu|=|\nu|=0$. Let $n_0:=n-l(\mu)-l(\nu)$. The moduli space 
$$
\oM_{g,n_0}^\sim(\CP^1,\mu,\nu)
$$
parameterizes stable relative maps of connected algebraic curves of genus $g$ to rubber $\CP^1$ with ramification profiles $\mu,\nu$ over the points $0,\infty\in\CP^1$, respectively. There is a natural map 
$$
\st\colon \oM_{g,n_0}^\sim(\CP^1,\mu,\nu)\to\oM_{g,n}
$$
forgetting everything except the marked domain curve. The moduli space $\oM_{g,n_0}^\sim(\CP^1,\mu,\nu)$ possesses a virtual fundamental class $\left[\oM_{g,n_0}^\sim(\CP^1,\mu,\nu)\right]^\virt$, which is a homology class of degree $2(2g-3+n)$. The {\it double ramification cycle} 
$$
\DR_g(a_1,\ldots,a_n)\in H^{2g}(\oM_{g,n})
$$
is defined as the Poincar\'e dual to the push-forward $\st_*\left[\oM_{g,n_0}^\sim(\CP^1,\mu,\nu)\right]^\virt\in H_{2(2g-3+n)}(\oM_{g,n})$.

\medskip

The crucial property of the DR cycle is that for any cohomology class $\theta\in H^*(\oM_{g,n})$ the integral $\int_{\oM_{g,n+1}}\lambda_g\DR_g\left(-\sum a_i,a_1,\ldots,a_n\right)\theta$ is a homogeneous polynomial in $a_1,\ldots,a_n$ of degree~$2g$ \cite[Lemma~3.2]{Bur15}. 

\medskip

An explicit formula for the KdV Hamiltonians $\oh_d$ in terms of the geometry of $\oM_{g,n}$ was found in~\cite[Section~4.3.1]{Bur15}. For any $d\in\mbZ_{\ge 0}$, define
$$
h_d:=\sum_{g\ge 0,\,n\ge 2}\frac{\eps^{2g}}{n!}\sum_{\substack{d_1,\ldots,d_n\in\mbZ_{\ge 0}\\\sum d_i=2g}}\Coef_{(a_1)^{d_1}\cdots (a_n)^{d_n}}\left(\int_{\oM_{g,n+1}}\lambda_g\psi_1^d\DR_g\left(-\sum a_i,a_1,\ldots,a_n\right)\right)u_{d_1}\cdots u_{d_n}.
$$
Then these differential polynomials give densities for the KdV Hamiltonians: $\oh_d=\int h_d dx$. In other language, as elements of $\hcB$, the Hamiltonians $\oh_d$ are given by
$$
\oh_d=\sum_{g\ge 0,\,n\ge 2}\frac{(-\eps^2)^g}{n!}\sum_{\substack{a_1,\ldots,a_n\in\mbZ\\\sum a_i=0}}\left(\int_{\oM_{g,n+1}}\lambda_g\psi_1^d\DR_g\left(0,a_1,\ldots,a_n\right)\right)p_{a_1}\cdots p_{a_n}.
$$

\medskip

\subsection{Quantum KdV hierarchy and quantum intersection numbers}

Consider the vector space $\hcB[[\hbar]]$. The \emph{Moyal product} $\star$ on it is defined by
$$
f \star h=e^{\sum_{k>0} i \hbar k \frac{\d}{\d p_{k}}\frac{\d}{\d q_{-k}}}\left.(f(p_*,\eps,\hbar)h(q_*,\eps,\hbar))\right|_{q_c\mapsto p_c},\quad f,h\in\hcB[[\hbar]],
$$
where $q_k$, $k\in\mbZ$, are additional formal variables and $h(q_*,\eps,\hbar):=h|_{p_a\mapsto q_a}$. The resulting algebra structure on $\hcB[[\hbar]]$ is a deformation quantization of the Poisson algebra $(\hcB,\{\cdot,\cdot\})$ in the sense that for $f=\sum_{i\ge 0}f_i\hbar^i$ and $h=\sum_{i\ge 0}h_i\hbar^i$, $f_i,h_i\in\hcB$, we have $[f,h]=\hbar\{f_0,h_0\}+O(\hbar^2)$.

\medskip

We extend the linear maps $\phi\colon\hcA\to\hcB[[e^{ix},e^{-ix}]]$ and $\tphi_0\colon\hLambda\to\hcB$ to linear maps $\phi\colon\hcA[[\hbar]]\to\hcB[[\hbar]][[e^{ix},e^{-ix}]]$ and $\tphi_0\colon\hLambda[[\hbar]]\to\hcB[[\hbar]]$ by coefficient-wise action. Note that if $f\in\Im(\phi)\subset\hcB[[\hbar]][[e^{ix},e^{-ix}]]$ and $\oh\in\Im(\tphi_0)\subset\hcB[[\hbar]]$, then $[f,\oh]\in\Im(\phi)$~\cite[formula~(2.2)]{BR16}. This implies that the subspace $\Im(\tphi_0)\subset\hcB[[\hbar]]$ is closed under the commutator $[\cdot,\cdot]$.

\medskip

In~\cite{BR16}, the authors defined elements $\oH_d\in\hcB[[\hbar]]$ by
$$
\oH_d:=\sum_{g\ge 0,\,n\ge 1}\frac{1}{n!}\sum_{\substack{a_1,\ldots,a_n\in\mbZ\\\sum a_i=0}}\left(\int_{\oM_{g,n+1}}\left(\sum_{j=0}^g(i\hbar)^{g-j}(-\eps^2)^j\lambda_j\right)\psi_1^d\DR_g\left(0,a_1,\ldots,a_n\right)\right)p_{a_1}\cdots p_{a_n}.
$$
Clearly, $\oH_d=\oh_d+O(\hbar)$. In~\cite[Theorem~3.4]{BR16}, the authors proved that $[\oH_{d_1},\oH_{d_2}]=0$, and so the elements $\oH_d$ give a quantization of the KdV hierarchy. As we already mentioned in the introduction, the elements $\oH_d$ are called the \emph{quantum Hamiltonians}.

\medskip
 
Since the integral $\int_{\oM_{g,n+1}}\lambda_j\psi_1^d\DR_g\left(0,a_1,\ldots,a_n\right)$ is a polynomial in $a_1,\ldots,a_n$~\cite[Proposition~B.1]{BR16}, the elements $\oH_d\in\hcB[[\hbar]]$ belong to the image of the inclusion $\tphi_0\colon\hLambda[[\hbar]]\to\hcB[[\hbar]]$. Therefore, 
\begin{gather*}
\hbar^{1-n}\left[\left[\ldots\left[\left[\frac{\delta\oH_{d_1}}{\delta u},\oH_{d_2}\right],\oH_{d_3}\right],\ldots\right],\oH_{d_n}\right]\in\hcA[[\hbar]],\quad n\ge 1,\,d_1,\ldots,d_n\ge 0.
\end{gather*}
In~\cite{BDGR20}, the authors checked that the multiple commutator here is symmetric with respect to all permutations of the numbers $d_1,\ldots,d_n$. In~\cite{Blo22}, the author proved that there exists a formal power series $\mcF^{(q)}\in\mbC[[t_0,t_1,\ldots,\eps,\hbar]]$ satisfying equations~\eqref{eq:construction1} and~\eqref{eq:construction2}, and moreover a solution is unique up to the constant term $\left.\mcF^{(q)}\right|_{t_*=0}$. This constant term is chosen in such a way that the equation
\begin{gather}\label{eq:quantum dilaton}
\frac{\d\mcF^{(q)}}{\d t_1}=\left(\sum_{n\ge 0}t_n\frac{\d}{\d t_n}+\eps\frac{\d}{\d\eps}+2\hbar\frac{\d}{\d\hbar}-2\right)\mcF^{(q)}+\frac{\eps^2}{24},
\end{gather}
called the~\emph{quantum dilaton equation}, is satisfied after the substitution $t_*=0$. Since 
$$
\<\tau_1\>_{l,g-l}=
\begin{cases}
\frac{1}{24},&\text{if $g=l=1$},\\
\frac{1}{2880},&\text{if $g=2$ and $l=1$},\\
0,&\text{otherwise},
\end{cases}
$$
this gives the constant term $\left.\mcF^{(q)}\right|_{t_*=0}=-\frac{i}{5760}\eps^2\hbar$. In~\cite{Blo22}, the author conjectured that equation~\eqref{eq:quantum dilaton} is true: it is clearly compatible with the classical dilaton equation for the formal power series~$\mcF$, and in~\cite{Blo22} the author also showed that it is true after the substitution $\eps=0$, however the quantum dilaton equation is not fully proved yet.

\medskip


\section{Proof of Theorem~\ref{theorem:quantum and GW}}\label{sec: proof main}

We will establish using the degeneration formula that
\begin{align}
{\rm Coef}_{\eps^{2l}\hbar^{g-l+n-1}}&\left.\bigg[\bigg[\ldots \left.\left.\left[\left[\frac{\delta\overline{H}_{d_{1}}}{\delta u},\overline{H}_{d_{2}}\right],\overline{H}_{d_{3}}\right],\dots\right],\overline{H}_{d_{n}}\right]\right|_{p_{\le 0}=0}=\label{eq: GW and DR}\\
 =&\delta_{k\ge 1}\sum_{a_{1},\dots,a_{k}>0}i^{g+l+n-1}\left\langle A,\lambda_{l}\prod_{i=1}^{n}\tau_{d_{i}}(\omega),\left(a_{1},\dots,a_{k}\right)\right\rangle ^{\circ}\frac{p_{a_{1}}\cdots p_{a_{k}}}{k!}e^{iAx}\nonumber \\
 &+\delta_{k,0}\delta_{n,1}i^{3g+l}\int_{\overline{\mathcal{M}}_{g,2}}\lambda_{g}\lambda_{l}\psi_1^{d_{1}},\nonumber 
\end{align}
where $k:=\sum d_{j}-2g+l+1$.

\medskip

First, let us denote $H_{d-1}:=\frac{\delta\overline{H}_{d}}{\delta u}$, $d\ge 0$. We have
\[
H_{d-1}=\sum_{g,m\geq 0}\frac{\left(i\hbar\right)^{g}}{m!}\sum_{a_{1},\dots,a_{m}\in\mathbb{Z}}\left(\int_{\oM_{g,m+2}}{\rm DR}_{g}\left(0,a_{1},\dots,a_{m},-A\right)\psi_{1}^d\Lambda\left(\frac{-\eps^{2}}{i\hbar}\right)\right)p_{a_{1}}\cdots p_{a_{m}}e^{iAx},
\]
where $\Lambda(s):=1+s\lambda_{1}+\cdots+s^{g}\lambda_{g}$. 

Then, we introduce the non associative product $f\tilde{\star}g:=f\star g-fg$. This slight modification of the star product is convenient for the following reason: the coefficient of $p_{a_{1}}\cdots p_{a_{m}}e^{Aix}\eps^{2l}\hbar^{g-l+n-1}$ in both $\left[\left[\ldots\left[\left[H_{d_{1}-1},\overline{H}_{d_{2}}\right],\overline{H}_{d_{3}}\right],\dots\right],\overline{H}_{d_{n}}\right]$ and $(\cdots((H_{d_{1}-1}\tilde{\star}\overline{H}_{d_{2}})\tilde{\star} \overline{H}_{d_{3}}) \tilde{\star}\cdots\tilde{\star}\overline{H}_{d_{n}})$ are identical when $a_{1},\dots,a_{m}>0$ and $m\geq0$. This occurs for two reasons: first the bracket $[\cdot,\cdot]$ defined as the commutator of the $\star$-product is also the commutator of the $\tilde{\star}$-product, and second 
\[
{\rm Coef}_{p_{a_{1}}\cdots p_{a_{m}}e^{Aix}\eps^{2l}\hbar^{g-l+n-1}}\overline{H}_{d_{n}}\tilde{\star}\left[\ldots\left[H_{d_{1}-1},\overline{H}_{d_{2}}\right],\dots,\overline{H}_{d_{n-1}}\right]=0,\quad a_{1},\dots,a_{m}>0.
\]
This vanishing happens because $\overline{H}_{d_{n}}\tilde{\star}\left(\cdots\right)$ involves a product of derivative $\frac{\partial}{\partial p_{k_1}}\cdots\frac{\partial}{\partial p_{k_s}}$, with $k_1,\dots,k_s>0$, from the star product acting on $\overline{H}_{d_{n}}$ and since we use the $\tilde{\star}$-product we have $s\geq1$. Consequently, when extracting the coefficient of $p_{a_{1}}\cdots p_{a_{m}}e^{Aix}\eps^{2l}$ with the condition $a_{1},\dots,a_{m}>0$, the sum of the parts of each DR-cycle from $\overline{H}_{d_{n}}$ is positive, contradicting the requirement that they sum to zero. This proves the vanishing. Then, a direct induction justifies the statement. Thus, in (\ref{eq: GW and DR}) we equivalently write the LHS with commutators or with $\tilde{\star}$-products.
\medskip

Before establishing (\ref{eq: GW and DR}), we show how this formula proves assertions $1$ and $2$ of Theorem~\ref{theorem:quantum and GW}.

\medskip

\subsection{Proving assertion 1}

The polynomial behavior of $P_{g,l,\od}(a_{1},\ldots,a_{k})$ follows from equation~(\ref{eq: GW and DR}) by the analysis of \cite[Section 6]{Blo22}. More precisely, one first writes an expression for $(\cdots(H_{d_{1}-1}\tilde{\star}\overline{H}_{d_{2}})\tilde{\star}\cdots\tilde{\star}\overline{H}_{d_{n}})$ from the expression of $H_{d_{1}-1}\star\overline{H}_{d_{2}}\star\cdots\star\overline{H}_{d_{n}}$ given by Eq.~(36) in~\cite{Blo22}, this only accounts for adding the conditions $\gamma$ described in the same section. Then, after extracting the coefficient of $p_{a_{1}}\cdots p_{a_{k}}e^{Aix}\eps^{2l}\hbar^{g-l+n-1}$ in this expression of $(\cdots(H_{d_{1}-1}\tilde{\star}\overline{H}_{d_{2}})\tilde{\star}\cdots\tilde{\star}\overline{H}_{d_{n}})$, the polynomality, degree and parity properties follows from \cite[Lemma~6.3]{Blo22}.

\medskip

\subsection{Proving assertion 2}

The correlator $\left\langle \tau_{0}\tau_{d_{1}}\ldots\tau_{d_{n}}\right\rangle {}_{l,g-l}$ is obtained from (\ref{eq: GW and DR}) by evaluating at $u_{i}=\delta_{i,1}$. This evaluation is done using Lemma $6.2$ in \cite{BDGR18} and directly yields the result.

\medskip

\subsection{Proving (\ref{eq: GW and DR})}

By a direct dimension counting, the coefficient of $p_{a_{1}}\cdots p_{a_{m}}e^{Aix}\eps^{2l}\hbar^{g-l+n-1}$ in $(\cdots(H_{d_{1}-1}\tilde{\star}\overline{H}_{d_{2}})\tilde{\star}\cdots\tilde{\star}\overline{H}_{d_{n}})$ can only be non zero if $m=k$.

\medskip

If $k=0$, then $\left.{\rm Coef}_{\eps^{2l}\hbar^{g-l+n-1}}(\cdots(H_{d_{1}-1}\tilde{\star}\overline{H}_{d_{2}})\tilde{\star}\cdots\tilde{\star}\overline{H}_{d_{n}})\right|_{p_{*}=0}$ vanishes for $n>1$ since, once again, the parts of the DR cycles coming from $\overline{H}_{d_{n}}$ after the action of the $\tilde\star$-product and the evaluation $p_*=0$ cannot add up to zero. When $n=1$, we have $\left.{\rm Coef}_{\eps^{2l}\hbar^{g-l+n-1}}H_{d_{1}-1}\right|_{p_{*}=0}=i^{g+l}\int_{\oM_{g,2}}{\rm DR}_{g}(0,0)\psi_1^{d_{1}}\lambda_{l}$ and the property of the DR cycle ${\rm DR}_{g}(0,0)=(-1)^{g}\lambda_{g}$ yields the result.

\medskip

We now prove the main ingredient of (\ref{eq: GW and DR}): when $k>0$, we show that 
\begin{multline}
\sum_{a_1,\ldots,a_k>0}\left\langle A,\lambda_{l}\prod_{i=1}^{n}\tau_{d_{i}}(\omega),\left(a_{1},\dots,a_{k}\right)\right\rangle ^{\circ}\frac{p_{a_1}\cdots p_{a_k}}{k!}e^{iAx}=\\
=\left(-i\right)^{g+l+n-1}\left.{\rm Coef}_{\eps^{2l}\hbar^{g-l+n-1}}(\cdots(H_{d_{1}-1}\tilde{\star}\overline{H}_{d_{2}})\tilde{\star}\cdots\tilde{\star}\overline{H}_{d_{n}})\right|_{p_{\le 0}=0}.\label{eq: application degeneration}
\end{multline}
The proof is done by the induction over $n$ using the degeneration formula and the following lemma.

\medskip

\begin{lemma}
\label{lem: Identification GW and DR}
Fix $g,n\geq0$ such that $2g-1+n>0$. Let $b_{1},\ldots,b_{n}\in\mathbb{Z}$ such that $\sum_{i=1}^{n}b_{i}=0$. We have
\[
\int_{\oM_{g,n+1}}\DR_{g}(0,b_{1},\ldots,b_{n})\psi_1^d\lambda_l=
\int_{\left[\overline{\mathcal{M}}_{g,1+n_{0}}^{\circ}(\CP^1,\mu,\nu)\right]^{{\rm virt}}}\lambda_{l}\psi_{1}^{d}\ev_1^*(\omega),
\]
where $\mu$, $\nu$ are the positive and the negatives of the negative parts of $\left(0,b_{1},\dots,b_{n}\right)$, respectively, and $n_{0}+1$ is the number of parts equal to 0.
\end{lemma}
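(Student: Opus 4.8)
The plan is to identify both sides of the claimed equality as integrals over the same moduli space, using the comparison between the double ramification cycle and rubber relative maps that is already built into the definition of $\DR_g$ recalled earlier in the paper. Recall from Section~\ref{sec: quantum numbers} that $\DR_g(a_1,\ldots,a_n)$ is by definition the Poincar\'e dual of $\st_*[\oM_{g,n_0}^\sim(\CP^1,\mu,\nu)]^\virt$, where $\st\colon\oM_{g,n_0}^\sim(\CP^1,\mu,\nu)\to\oM_{g,n}$ is the map forgetting everything but the marked domain curve, and $\mu,\nu$ are the positive and the negatives-of-the-negative parts of $(a_1,\ldots,a_n)$. So the left-hand side,
\[
\int_{\oM_{g,n+1}}\DR_g(0,b_1,\ldots,b_n)\psi_1^d\lambda_l,
\]
can be rewritten by the projection formula as an integral over the rubber space $\oM_{g,1+n_0}^\sim(\CP^1,\mu,\nu)$ of the pullbacks $\st^*\psi_1$ and $\st^*\lambda_l$ against the virtual class. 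Here I use that the $0$ appended in the first slot of $\DR_g(0,b_1,\ldots,b_n)$ together with the $n_0$ zeros among the $b_i$ account for the $1+n_0$ non-ramification markings on the rubber side.

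The next step is to compare the rubber space $\oM_{g,1+n_0}^\sim(\CP^1,\mu,\nu)$ with the (non-rubber) relative space $\oM_{g,1+n_0}^\circ(\CP^1,\mu,\nu)$ appearing on the right-hand side. The right-hand side has a single $\ev_1^*(\omega)$ insertion. The standard mechanism (rigidification of the rubber target) identifies a $\psi$-type or target-point class on the relative space with the pushforward of the virtual class from the rubber space: concretely, cutting $\CP^1$ at a chosen point $p\ne 0,\infty$ and imposing that marking $1$ maps to $p$ rigidifies the $\mathbb{C}^*$-action defining the rubber, so that $\ev_1^*(\omega)\cap[\oM_{g,1+n_0}^\circ(\CP^1,\mu,\nu)]^\virt$ is precisely the pushforward of $[\oM_{g,1+n_0}^\sim(\CP^1,\mu,\nu)]^\virt$ along the map that forgets the rigidification. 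Under this identification the classes $\lambda_l$ and $\psi_1^d$ on both sides correspond (the Hodge bundle and the cotangent line at marking $1$ are pulled back compatibly, since marking $1$ lies over a smooth non-relative point), and the two integrals agree. One also has to check the degenerate cases where $\mu$ or $\nu$ is empty (then the rubber/relative target is still $\CP^1$ relative to one or zero points, and the same rigidification argument applies) and the low-genus/low-$n$ boundary cases permitted by $2g-1+n>0$; these are handled by the conventions already fixed in the paper.

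In the write-up I would structure it as: (i) apply the definition of $\DR_g$ and the projection formula to pass to the rubber space; (ii) invoke the rubber-calculus identity $\ev_1^*(\omega)\cap[\oM^\circ]^\virt = (\text{forget rigidification})_*[\oM^\sim]^\virt$; (iii) match $\psi_1^d$ and $\lambda_l$ across the two spaces and conclude. The main obstacle I anticipate is step (ii): making precise the comparison between the rubber virtual class and the relative virtual class cut by $\ev_1^*(\omega)$, including correct handling of automorphisms/multiplicities in the ramification profiles (since points in $\mu$ and $\nu$ are labeled here, there are no extra $|\Aut|$ factors, which is convenient) and the edge cases $\mu=\emptyset$ or $\nu=\emptyset$. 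This is essentially the content of the Okounkov--Pandharipande rubber calculus / Li's degeneration setup, so I would cite \cite{OP06,Li01} for the precise statement rather than reprove it, and spend the bulk of the argument verifying that the class insertions $\psi_1^d\lambda_l$ transport correctly under these identifications.
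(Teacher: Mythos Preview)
Your plan is correct and matches the paper's proof almost exactly: (i) projection formula along $\st$ using the definition of $\DR_g$, then (ii) the rubber-to-parametrized comparison, then (iii) matching $\psi_1$ and $\lambda_l$. Two small points to tighten. First, the comparison in step (ii) goes the other way: with $p\colon\oM^\circ_{g,1+n_0}(\CP^1,\mu,\nu)\to\oM^\sim_{g,1+n_0}(\CP^1,\mu,\nu)$ the forgetful map, the identity is $[\oM^\sim]^\virt=p_*\big(\ev_1^*(\omega)\cap[\oM^\circ]^\virt\big)$, and the precise reference is \cite[Lemma~2]{MP06} rather than \cite{OP06,Li01}. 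Second, for step (iii) on the $\st$ side you need $\Psi_1=\st^*\psi_1$ on the rubber space; the paper's argument is that the component carrying marking~$1$ is never contracted by $\st$ (marking~$1$ has weight~$0$, so it sits on a component already stable in $\oM_{g,n+1}$), which is slightly sharper than ``marking~$1$ lies over a smooth non-relative point''.
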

\begin{proof}
Let $\Psi_{1}\in H^{2}\left(\overline{\mathcal{M}}_{g,n_{0}+1}^{\sim}\left(\mathbb{CP}^{1},\mu,\nu\right)\right)$ be the first Chern class of the cotangent line bundle at the point $1$. The cotangent line bundle at the point $1$ over $\overline{\mathcal{M}}_{g,n_{0}+1}^{\sim}\left(\mathbb{CP}^{1},\mu,\nu\right)$ is identified with the pull-back by the map $\st\colon \overline{\mathcal{M}}_{g,n_{0}+1}^{\sim}\left(\mathbb{CP}^{1},\mu,\nu\right)\to\oM_{g,n+1}$ of the cotangent line bundle at the point $1$ over $\overline{\mathcal{M}}_{g,n+1}$ since the component of the point $1$ is not stabilised by the forgetful map, thus we have $\Psi_{1}=\st^*(\psi_{1})$. Moreover, since the DR cycle is, by definition, the push-forward by $\st$ of the virtual fundamental class of $\overline{\mathcal{M}}_{g,n_{0}+1}^{\sim}\left(\CP^1,\mu,\nu\right)$, and furthermore, since $\lambda_i=\st^*(\lambda_i)$, we get by the projection formula
\[
\int_{\oM_{g,n+1}}\DR_{g}(0,b_{1},\ldots,b_{n})\psi_1^d\lambda_l=\int_{\left[\overline{\mathcal{M}}_{g,n_{0}+1}^{\sim}\left(\mathbb{CP}^{1},\mu,\nu\right)\right]^{{\rm virt}}}\Psi_{1}^{d}\lambda_{l}.
\]
Let $p\colon\oM^\circ_{g,n_0+1}(\CP^1,\mu,\nu)\to\oM^\sim_{g,n_0+1}(\CP^1,\mu,\nu)$ be the canonical forgetful map. By \cite[Lemma~2]{MP06}, we have $\left[\oM^\sim_{g,n_0+1}(\CP^1,\mu,\nu)\right]^\virt=p_*\left(\ev_1^*(\omega)\cap\left[\oM^\circ_{g,n_0+1}(\CP^1,\mu,\nu)\right]^\virt\right)$, which by the projection formula yields the result.
\end{proof}

\medskip

The case $n=1$ in (\ref{eq: application degeneration}) directly follows from the statement of the lemma. 

\medskip

We now prove (\ref{eq: application degeneration}) for $n\geq 2$. The degeneration formula \cite[Theorem~3.15]{Li02} gives
\begin{align}
&\left<A,\lambda_{l}\prod_{j=1}^n\tau_{d_j}(\omega),\left(a_{1},\dots,a_{k}\right)\right>^{\bullet}=\label{eq: degeneration disconnected}\\
 & =\sum_{l_{1}+l_{2}=l}\sum_{p\geq1}\sum_{\substack{\mu=(\mu_{1},\ldots,\mu_{p})\in\mathbb{Z}_{\ge1}^p\\
\ensuremath{\mu_{1}+\cdots+\mu_{p}=A}
}
}\frac{\mu_{1}\cdots\mu_{p}}{p!}\left\langle A,\lambda_{l_{1}}\prod_{j=1}^{n-1}\tau_{d_{j}}(\omega),\mu\right\rangle ^{\bullet}\left\langle \mu,\lambda_{l_{2}}\tau_{d_{n}}(\omega),\left(a_{1},\dots,a_{k}\right)\right\rangle ^{\bullet},\nonumber 
\end{align}
where we considered that the target $\mathbb{CP}^{1}$ degenerates to $\mathbb{CP}^{1}\cup\mathbb{CP}^{1}$ intersecting at a node, such that the first $\mathbb{CP}^{1}$ contains the relative point associated to the total ramification $A$ and the images of the points $1,\dots,n-1$, and the second $\mathbb{CP}^{1}$ contains the second relative point and the image of the point $n$. Note that, as in \cite{Li02}, we label the points in the ramification profiles. We now explain how the disconnected contributions on each side of (\ref{eq: degeneration disconnected}) compensate to give a formula involving only connected invariants.

\medskip

\paragraph{\underline{LHS of (\ref{eq: degeneration disconnected})}} 

The term $\left<A,\lambda_{l}\prod_{j=1}^n\tau_{d_j}(\omega),\left(a_{1},\dots,a_{k}\right)\right>^{\bullet}$ on the left-hand side (LHS) of (\ref{eq: degeneration disconnected}) equals the connected contribution $\left<A,\lambda_{l}\prod_{j=1}^n\tau_{d_j}(\omega),\left(a_{1},\dots,a_{k}\right)\right>^{\circ}$, plus contributions with disconnected domains. Since the preimage of $0$ is given by a unique point with total ramification~$A$, the disconnected contributions correspond to whenever the point $1$, or the point $2$, $\dots$, or the point $n$ is on a component of degree $0$. Furthermore, since $\omega^{2}=0$, if the two marked points are on the same component of degree $0$, this contribution vanishes. We get
\begin{align}
&\left<A,\lambda_{l}\prod_{j=1}^n\tau_{d_j}(\omega),\left(a_{1},\dots,a_{k}\right)\right>^{\bullet}=\left<A,\lambda_{l}\prod_{j=1}^n\tau_{d_j}(\omega),\left(a_{1},\dots,a_{k}\right)\right>^{\circ}\label{eq:disconnected from connected}\\
&\hspace{1.5cm}  +\sum_{I\subsetneq\left[n\right]}\sum_{\substack{m\colon\{0\}\sqcup I\to\mbZ_{\ge 0}\\m(0)+\sum_{i\in I}m(i)=l}}\left\langle A,\lambda_{m(0)}\prod_{j\in I^{c}}\tau_{d_{j}}(\omega),\left(a_{1},\dots,a_{k}\right)\right\rangle ^{\circ}\prod_{i\in I}\left<\emptyset,\lambda_{m(i)}\tau_{d_{i}}(\omega),\emptyset\right>^{\circ}\notag\\
&\hspace{1.5cm}  +\delta_{k,1}\underset{=1/A}{\underbrace{\left\langle A,1,A\right\rangle ^{\circ}}}\sum_{m_{1}+\cdots+m_{n}=l}\prod_{i=1}^{n}\left\langle\emptyset,\lambda_{m_i}\tau_{d_{i}}(\omega),\emptyset\right\rangle ^{\circ},\notag
\end{align}
where $I^{c}:=[n]\backslash I$, moreover we used in the last term that $\left\langle A,\lambda_{l},\left(a_{1},\dots,a_{k}\right)\right\rangle ^{\circ}\neq0$ only for $l=0$ and $k=1$ by dimension counting. 

\medskip

\paragraph{\underline{RHS of (\ref{eq: degeneration disconnected})}}

Similarly, the factor $\left\langle A,\lambda_{l_{1}}\prod_{j=1}^{n-1}\tau_{d_j}(\omega),\left(\mu_{1},\dots,\mu_{p}\right)\right\rangle ^{\bullet}$ on the right-hand side (RHS) of equation~(\ref{eq: degeneration disconnected}) is equal to the connected contribution $\left\langle A,\lambda_{l_{1}}\prod_{j=1}^{n-1}\tau_{d_j}(\omega),\left(\mu_{1},\dots,\mu_{p}\right)\right\rangle ^{\circ}$ plus disconnected contributions such that the point $1$, or $2$, $\dots$, or $n-1$ lies on a degree $0$ component, we get
\begin{align*}
&\langle A,\lambda_{l_{1}}\prod_{j=1}^{n-1}\tau_{d_j}(\omega),\left(\mu_{1},\dots,\mu_{p}\right)\rangle^{\bullet}=\left\langle A,\lambda_{l_{1}}\prod_{j=1}^{n-1}\tau_{d_j}(\omega),\left(\mu_{1},\dots,\mu_{p}\right)\right\rangle ^{\circ}\\
&\hspace{2.5cm} +\sum_{I\subsetneq\left[n-1\right]}\sum_{\substack{m\colon\{0\}\sqcup I\to\mbZ_{\ge 0}\\m(0)+\sum_{i\in I}m(i)=l_1}}\left\langle A,\lambda_{m(0)}\prod_{j\in I^{c}}\tau_{d_{j}}(\omega),\left(\mu_1,\ldots,\mu_p\right)\right\rangle ^{\circ}\prod_{i\in I}\left\langle\emptyset,\lambda_{m(i)}\tau_{d_{i}}(\omega),\emptyset\right\rangle^{\circ}\\
&\hspace{2.5cm}+\delta_{p,1}\left\langle A,1,A\right\rangle ^{\circ}\sum_{m_{1}+\cdots+m_{n}=l_1}\prod_{i=1}^{n}\left\langle\emptyset,\lambda_{m_i}\tau_{d_{i}}(\omega),\emptyset\right\rangle^{\circ}.
\end{align*}
The factor $\left\langle \left(\mu_{1},\dots,\mu_{p}\right),\lambda_{l_{2}}\tau_{d_{n}}(\omega),\left(a_{1},\dots,a_{k}\right)\right\rangle ^{\bullet}$ is more complicated because we can split the profile above $0$ and above $\infty$ on different domains, we get
\begin{align*}
\left\langle \left(\mu_{1},\dots,\mu_{p}\right),\lambda_{l_{2}}\tau_{d_{n}}(\omega),\left(a_{1},\dots,a_{k}\right)\right\rangle ^{\bullet} =&\sum_{\substack{I_{1}\sqcup I_{2}=\left[p\right]\\
I_{2}\neq\emptyset}}\sum_{\substack{J_{1}\sqcup J_{2}=\left[k\right]\\
J_{2}\neq\emptyset
}
}\left\langle \mu_{I_{1}},1,a_{J_{1}}\right\rangle ^{\bullet}\left\langle \mu_{I_{2}},\lambda_{l_{2}}\tau_{d_{n}}(\omega),a_{J_{2}}\right\rangle ^{\circ}\\
 & +\left\langle \left(\mu_{1},\dots,\mu_{p}\right),1,\left(a_{1},\dots,a_{k}\right)\right\rangle ^{\bullet}\left\langle\emptyset,\lambda_{l_{2}}\tau_{d_{n}}(\omega),\emptyset\right\rangle ^{\circ}.
\end{align*}
Note that a dimension counting forces the $\lambda$-class to entirely lie on the component with the $n$-th marked point. In addition, the term $\left\langle \mu_{I_{1}},1,a_{J_{1}}\right\rangle ^{\bullet}$ is, again by dimension counting, equal to a product of relative connected invariants with genus $0$ domain and $2$ marked points, each of them contributing as $\left\langle a,1,a\right\rangle ^{\circ}=\frac{1}{a}$, $a>0$. Same story for $\left\langle \left(\mu_{1},\dots,\mu_{p}\right),1,\left(a_{1},\dots,a_{k}\right)\right\rangle ^{\bullet}$.

\medskip

Finally, the contributions of the LHS and RHS of (\ref{eq: degeneration disconnected}) such that at least one marked point from $1,\ldots,n$ lies on a component of degree $0$ compensate. After simplifications, one gets
\begin{align}
&\left\langle A,\lambda_{l} \prod_{j=1}^n\tau_{d_j}(\omega),\left(a_{1},\dots,a_{k}\right)\right\rangle^{\circ}=\label{eq: degeneration connected}\\
=&\sum_{\substack{J_{1}\sqcup J_{2}=\left[k\right]\\l_{1}+l_{2}=l\\p\ge 1}}\sum_{\substack{\mu=(\mu_{1},\ldots,\mu_{p})\in\mathbb{Z}_{\ge 1}^p\\
\mu_{1}+\cdots+\mu_{p}=A_{J_2}}}\frac{\mu_{1}\cdots\mu_{p}}{p!}\left\langle A,\lambda_{l_{1}}\prod_{j=1}^{n-1}\tau_{d_j}(\omega),\left(\mu,a_{J_{1}}\right)\right\rangle ^{\circ}\left\langle \mu,\lambda_{l_{2}}\tau_{d_{n}}(\omega),a_{J_{2}}\right\rangle ^{\circ},\nonumber 
\end{align}
where $\left(\mu,a_{J_{1}}\right)$ is the $(p+|J_1|)$-tuple obtained by concatenation from the tuples $\mu$ and $a_{J_1}$. Now, using Lemma~\ref{lem: Identification GW and DR} and the induction hypothesis, we find that after the multiplication by $\frac{p_{a_1}\cdots p_{a_k}}{k!}e^{iAx}$ and summing over $a_1,\ldots,a_k>0$ the RHS of (\ref{eq: degeneration connected}) equals the coefficient of $\eps^{2l}\hbar^{g-l+n-1}$ in the $\tilde{\star}$-product $\left.(\cdots(H_{d_{1}-1}\tilde{\star}\overline{H}_{d_{2}})\tilde{\star}\cdots\tilde{\star}\overline{H}_{d_{n}})\right|_{p_{\le 0}=0}$, after the multipliation by $\left(-i\right)^{g+l+n-1}$. This establishes (\ref{eq: application degeneration}), thereby concluding the proof. 

\medskip


\section{A new proof of formula~\eqref{eq:main formula}}\label{sec: proof Hurwitz}

Using the quantum string equation, we observe that formula~\eqref{eq:main formula} follows from the equality
$$
\sum_{g\ge 0}\sum_{d_1,\ldots,d_n\ge 0}\<\tau_0\tau_{d_1}\ldots\tau_{d_n}\>_{0,g}z^{2g}\mu_1^{d_1}\cdots\mu_n^{d_n}=\left(\sum\mu_j\right)^{n-2}\frac{\prod_{i=1}^n S\left(\mu_i(\sum\mu_j)z\right)}{S\left((\sum\mu_j)z\right)},\quad n\ge 2,
$$
which can be equivalently written as
\begin{gather}\label{eq:equivalent to main formula}
\sum_{g,d_1,\ldots,d_n\ge 0}\<\tau_0\tau_{d_1}\ldots\tau_{d_n}\>_{0,g}t^{\sum d_i-2g+1}\mu_1^{d_1}\cdots\mu_n^{d_n}=t^{n-1}\left(\sum\mu_j\right)^{n-2}\frac{\prod_{i=1}^n S\left(\mu_i(\sum\mu_j)t\right)}{S\left(\sum\mu_j\right)},\,\, n\ge 2.
\end{gather}
For two tuples $\mu=(\mu_1,\ldots,\mu_k)\in\mbZ_{\ge 1}^k$ and $\nu=(\nu_1,\ldots,\nu_m)\in\mbZ_{\ge 1}^m$, $k,m\ge 0$, with $\sum\mu_i=\sum\nu_j$, we introduce the generating series
$$
F^\circ_{\mu,\nu}(z_1,\ldots,z_n):=\sum_{d_1,\ldots,d_n\ge 0}\<\mu,\prod_{i=1}^n\tau_{d_i}(\omega),\nu\>^\circ z_1^{d_1+1}\cdots z_n^{d_n+1}.
$$
Using Theorem~\ref{theorem:quantum and GW}, we see that formula~\eqref{eq:equivalent to main formula} follows from
\begin{gather*}
\frac{1}{k!}\Coef_{a_1\cdots a_k}F^\circ_{A,(a_1,\ldots,a_k)}(z_1,\ldots,z_n)=\left(\prod_{i=1}^n z_i\right)\frac{Z^{n-2}}{S\left(Z\right)}\Coef_{t^{k-n+1}}\left(\prod_{i=1}^n S\left(z_i Zt\right)\right),\,\, k\ge 1,\,n\ge 1,
\end{gather*}
which is equivalent to
\begin{gather}\label{eq:formula1}
\frac{1}{k!}\Coef_{a_1\cdots a_k}F^\circ_{A,(a_1,\ldots,a_k)}(z_1,\ldots,z_n)=\frac{1}{Z\varsigma(Z)}\Coef_{t^{k+1}}\left(\prod_{i=1}^n \varsigma(z_i Zt)\right),\quad k\ge 1,\,n\ge 1,
\end{gather}
where $Z:=\sum_{i=1}^n z_i$ and $\varsigma(z):=z S(z)=e^{z/2}-e^{-z/2}$. 

\medskip

Recall that a relative Gromov--Witten invariant $\<\mu,\prod_{i=1}^n\tau_{d_i}(\omega),\nu\>^\bullet_g$, $g\in\mbZ$, $n\ge 0$, $d_1,\ldots,d_n\ge 0$, is zero unless $2g-2+l(\mu)+l(\nu)=\sum d_i$. We adopt the convention
$$
\<\mu,\tau_{-2}(\omega)^l\tau_{-1}(\omega)^m\prod_{i=1}^n\tau_{d_i}(\omega),\nu\>^\bullet_g:=\delta_{m,0}\<\mu,\prod_{i=1}^n\tau_{d_i}(\omega),\nu\>^\bullet_{g+l}.
$$
So we will consider the numbers 
$$
\<\mu,\prod_{i=1}^n\tau_{k_i}(\omega),\nu\>^\bullet_g,\quad g\in\mbZ,\quad n\ge 0,\quad k_1,\ldots,k_n\ge -2.
$$
Note that it is still true that such a number vanishes unless the condition $2g-2+l(\mu)+l(\nu)=\sum k_i$ is satisfied. So we can still omit the genus in the notation. Introduce the generating series
$$
F^\bullet_{\mu,\nu}(z_1,\ldots,z_n):=\sum_{k_1,\ldots,k_n\ge -2}\<\mu,\prod_{i=1}^n\tau_{k_i}(\omega),\nu\>^\bullet z_1^{k_1+1}\cdots z_n^{k_n+1}.
$$
Note that (see e.g.~\cite[eq.~(0.26)]{OP06})
$$
\sum_{d\ge 0}\<\emptyset,\tau_d(\omega),\emptyset\>^\circ z^{d+1}=\frac{1}{\varsigma(z)}-\frac{1}{z},
$$
which implies that
$$
\sum_{d\ge -2}\<\emptyset,\tau_d(\omega),\emptyset\>^\bullet z^{d+1}=\frac{1}{\varsigma(z)},
$$
and then using~\eqref{eq:disconnected from connected} we obtain
$$
F^\bullet_{A,(a_1,\ldots,a_k)}(z_1,\ldots,z_n)=\sum_{J\subset [n]}\frac{F^\circ_{A,(a_1,\ldots,a_k)}(z_{J^c})}{\prod_{j\in J}\varsigma(z_j)},\quad k\ge 1,\quad a_1,\ldots,a_k\ge 1.
$$

\medskip

Note that
$$
F^\circ_{A,(a_1,\ldots,a_k)}()=
\begin{cases}
\frac{1}{a_1},&\text{if $k=1$},\\
0,&\text{if $k\ge 2$}.
\end{cases}
$$
So formula~\eqref{eq:formula1} implies that
\begin{gather}\label{eq:formula2}
\frac{1}{k!}\Coef_{a_1\cdots a_k}F^\bullet_{A,(a_1,\ldots,a_k)}(z_1,\ldots,z_n)=\sum_{J\subsetneq[n]}\frac{\Coef_{t^{k+1}}\left(\prod_{j\in J^c}\varsigma\left(z_j Z_{J^c}t\right)\right)}{\left(\prod_{j\in J}\varsigma(z_j)\right)Z_{J^c}\varsigma\left(Z_{J^c}\right)},\quad k,n\ge 1.
\end{gather} 
On the other hand, using the induction on $n$, one can easily see that formula~\eqref{eq:formula2} implies formula~\eqref{eq:formula1}. Let us now prove formula~\eqref{eq:formula2}.

\medskip

We will use an explicit formula for $F^\bullet_{A,(a_1,\ldots,a_k)}(z_1,\ldots,z_n)$ obtained in~\cite{OP06} using the infinite wedge formalism (see~\cite[Section~2]{OP06}), which we briefly recall now. We consider a vector space $V$ with basis $\{\underline{k}\}$ indexed by the half-integers:
$$
V=\bigoplus_{k\in\mbZ+\frac{1}{2}}\mbC\underline{k}.
$$
Denote by $\Lambda^{\frac{\infty}{2}}V$ the vector space spanned by the infinite wedge products 
$$
\underline{a_1}\wedge\underline{a_2}\wedge\ldots\quad\text{with $a_i=-i+\frac{1}{2}+c$ for some $c\in\mbZ$ and $i$ big enough.}
$$
For any $k\in\mbZ+\frac{1}{2}$, define linear operators $\psi_k,\psi_k^*\colon\Lambda^{\frac{\infty}{2}}V\to\Lambda^{\frac{\infty}{2}}V$ by 
\begin{align*}
&\psi_k(\underline{a_1}\wedge\underline{a_2}\wedge\ldots):=\underline{k}\wedge \underline{a_1}\wedge\underline{a_2}\wedge\ldots,\\
&\psi_k^*(\underline{a_1}\wedge\underline{a_2}\wedge\ldots):=\sum_{i=1}^\infty(-1)^{i-1}\delta_{a_i,k}\,\underline{a_1}\wedge\ldots\wedge\widehat{\underline{a_i}}\wedge\ldots.
\end{align*}
These operators satisfy the anti-commutation relations
$$
\psi_i\psi_j^*+\psi_j^*\psi_i=\delta_{i,j},\qquad \psi_i\psi_j+\psi_j\psi_i=\psi_i^*\psi_j^*+\psi_j^*\psi_i^*=0.
$$
\emph{Normally ordered} products are defined by
$$
:\psi_i\psi_j^*:=
\begin{cases}
\psi_i\psi_j^*,&\text{if $j>0$},\\
-\psi_j^*\psi_i,&\text{if $j<0$}.
\end{cases}
$$

\medskip

Let $v_\emptyset:=\underline{-\frac{1}{2}}\wedge\underline{-\frac{3}{2}}\wedge\ldots$. For an operator $A\colon\Lambda^{\frac{\infty}{2}}V\to \Lambda^{\frac{\infty}{2}}V$ denote by $\<A\>$ the coefficient of~$v_\emptyset$ in the decomposition of $A(v_\emptyset)$ in the basis
$$
\left\{\underline{a_1}\wedge\underline{a_2}\wedge\ldots\in\Lambda^{\frac{\infty}{2}}V\left|a_1>a_2>\ldots, \text{ $a_i=-i+\frac{1}{2}+c$ for some $c\in\mbC$ and $i$ big enough}\right.\right\}.
$$

\medskip

For any $r\in\mbZ$, define an operator $\cE_r(z)$ on $\Lambda^{\frac{\infty}{2}}V$ by
$$
\cE_r(z)=\sum_{k\in\mbZ+\frac{1}{2}}e^{z(k-\frac{r}{2})}:\psi_{k-r}\psi_k^*:+\frac{\delta_{r,0}}{\varsigma(z)}.
$$
These operators satisfy the commutation relation  
$$
[\cE_a(z),\cE_b(w)]=\varsigma(aw-bz)\cE_{a+b}(z+w).
$$
Finally, we define operators $\alpha_k:=\cE_k(0)$ for $k\ne 0$. We have the commutation relations
\begin{align*}
&[\alpha_a,\cE_b(z)]=\varsigma(az)\cE_{a+b}(z),&& a\ne 0,\\
&[\alpha_a,\alpha_b]=a\delta_{a+b,0},&& a,b\ne 0.
\end{align*}

\medskip

By \cite[Proposition~3.1]{OP06}, we have
$$
F^\bullet_{A,(a_1,\ldots,a_k)}(z_1,\ldots,z_n)=\frac{1}{A\prod_{j=1}^k a_j}\<\alpha_A\prod_{i=1}^n\cE_0(z_i)\prod_{j=1}^k\alpha_{-a_j}\>,\quad k,n\ge 1.
$$
In order to transform the expression $\<\alpha_A\prod_{i=1}^n\cE_0(z_i)\prod_{j=1}^k\alpha_{-a_j}\>$, we move the operators $\alpha_{-a_j}$ through the operators $\cE_0(z_i)$ to the left, using the commutation relation
$$
[\cE_r(z),\alpha_{-k}]=\varsigma(kz)\cE_{r-k}(z).
$$
We obtain
\begin{align*}
\frac{1}{A\prod_{j=1}^k a_j}\<\alpha_A\prod_{i=1}^n\cE_0(z_i)\prod_{j=1}^k\alpha_{-a_j}\>=&\sum_{I_1\sqcup\ldots\sqcup I_n=[k]}\frac{\prod_{l=1}^n\prod_{i\in I_l}\varsigma(a_i z_l)}{A\prod_{j=1}^k a_j}\<\alpha_A\cE_{-A_{I_1}}(z_1)\cdots\cE_{-A_{I_n}}(z_n)\>+\\
&+\frac{\delta_{k,1}}{a_1^2}\<\alpha_{a_1}\alpha_{-a_1}\cE_0(z_1)\cdots\cE_0(z_n)\>.
\end{align*}

\medskip

In order to transform the expression $\<\alpha_A\cE_{-A_{I_1}}(z_1)\cdots\cE_{-A_{I_n}}(z_n)\>$, we move the operators~$\cE_{-A_{I_i}}(z_i)$ through the operator $\alpha_A$ to the left, using the commutation relation
$$
[\cE_a(z),\cE_b(w)]=\varsigma(aw-bz)\cE_{a+b}(z+w).
$$
At the first step, we obtain
\begin{align*}
&\<\alpha_A\cE_{-A_{I_1}}(z_1)\cdots\cE_{-A_{I_n}}(z_n)\>=\\
=&\varsigma(Az_1)\<\cE_{A-A_{I_1}}(z_1)\cE_{-A_{I_2}}(z_2)\cdots\cE_{-A_{I_n}}(z_n)\>+\delta_{I_1,\emptyset}\<\cE_0(z_1)\alpha_A\cE_{-A_{I_2}}(z_2)\cdots\cE_{-A_{I_n}}(z_n)\>.
\end{align*}
In the same way, at each step the number of summands doubles. After $n$ steps, we obtain~$2^n$ summands that are in one-to-one correspondence with subsets $J\subset[n]$: the summand corresponding to a subset $J\subset[n]$ contains the coefficient $\prod_{j\in J}\delta_{I_j,\emptyset}$. Note that the summand corresponding to $J=[n]$ vanishes, because at least one from the subsets $I_i$ is nonempty. The summand corresponding to the subset $J=\emptyset$ is equal to
$$
Q(A_{I_1},\ldots,A_{I_n};z_1,\ldots,z_n),
$$
where
\begin{align*}
&Q(b_1,\ldots,b_n;z_1,\ldots,z_n):=\frac{\scriptstyle{\varsigma(B z_1)\varsigma((B-b_1)z_2+b_2 z_1)\varsigma((B-b_1-b_2)z_3+b_3(z_1+z_2))\cdots\varsigma((B-b_1-\ldots-b_{n-1})z_n+b_n(z_1+\ldots+z_{n-1}))}}{\scriptstyle{\varsigma(z_1+\ldots+z_n)}}.
\end{align*}
In total, we obtain
$$
\<\alpha_A\cE_{-A_{I_1}}(z_1)\cdots\cE_{-A_{I_n}}(z_n)\>=\sum_{J\subsetneq[n]}\left(\prod_{j\in J}\frac{\delta_{I_j,\emptyset}}{\varsigma(z_j)}\right)Q(A_{I_{j_1}},\ldots,A_{I_{j_{|J^c|}}};z_{j_1},\ldots,z_{j_{|J^c|}}),
$$
where $J^c=\{j_1,\ldots,j_{|J^c|}\}$, $j_1<j_2<\ldots<j_{|J^c|}$. As a result,
\begin{align*}
&F^\bullet_{A,(a_1,\ldots,a_k)}(z_1,\ldots,z_n)=\\
=&\sum_{J\subsetneq[n]}\frac{1}{\prod_{j\in J}\varsigma(z_j)}\sum_{I_1\sqcup\ldots\sqcup I_{|J^c|}=[k]}\frac{\prod_{l=1}^{|J^c|}\prod_{i\in I_l}\varsigma(a_i z_{j_l})}{a_1\cdots a_k}\frac{Q(A_{I_1},\ldots,A_{I_{|J^c|}};z_{j_1},\ldots,z_{j_{|J^c|}})}{A}+\\
&+\frac{\delta_{k,1}}{a_1^2}\<\alpha_{a_1}\alpha_{-a_1}\cE_0(z_1)\cdots\cE_0(z_n)\>.
\end{align*}

\medskip

Now we need to take the coefficient of $a_1\cdots a_k$. Note that 
$$
\frac{\prod_{l=1}^{|J^c|}\prod_{i\in I_l}\varsigma(a_i z_{j_l})}{a_1\cdots a_k}=z_{j_1}^{|I_1|}\cdots z_{j_{|J^c|}}^{|I_{|J^c|}|}\prod_{l=1}^{|J^c|}\prod_{i\in I_l}S(a_i z_{j_l}).
$$
Therefore, 
\begin{align*}
&\frac{1}{k!}\Coef_{a_1\cdots a_k}F^\bullet_{A,(a_1,\ldots,a_k)}(z_1,\ldots,z_n)=\\
=&\sum_{J\subsetneq[n]}\frac{1}{\prod_{j\in J}\varsigma(z_j)}\sum_{I_1\sqcup\ldots\sqcup I_{|J^c|}=[k]}z_{j_1}^{|I_1|}\cdots z_{j_{|J^c|}}^{|I_{|J^c|}|}\frac{1}{k!}\Coef_{a_1\cdots a_k}\left(\frac{Q(A_{I_1},\ldots,A_{I_{|J^c|}};z_{j_1},\ldots,z_{j_{|J^c|}})}{A}\right).
\end{align*}
Let us fix $J\subsetneq[n]$ and denote $m=|J^c|$. Then we compute
\begin{align*}
&\sum_{I_1\sqcup\ldots\sqcup I_m=[k]}z_{j_1}^{|I_1|}\cdots z_{j_m}^{|I_m|}\frac{1}{k!}\Coef_{a_1\cdots a_k}\left(\frac{Q(A_{I_1},\ldots,A_{I_m};z_{j_1},\ldots,z_{j_m})}{A}\right)=\\
=&\sum_{I_1\sqcup\ldots\sqcup I_m=[k]}z_{j_1}^{|I_1|}\cdots z_{j_m}^{|I_m|}\frac{|I_1|!\cdots|I_m|!}{k!}\Coef_{b_1^{|I_1|}\cdots b_m^{|I_m|}}\left(\frac{Q(b_1,\ldots,b_m;z_{j_1},\ldots,z_{j_m})}{b_1+\ldots+b_m}\right)=\\
=&\sum_{k_1+\ldots+k_m=k}z_{j_1}^{k_1}\cdots z_{j_m}^{k_m}\Coef_{b_1^{k_1}\cdots b_m^{k_m}}\left(\frac{Q(b_1,\ldots,b_m;z_{j_1},\ldots,z_{j_m})}{b_1+\ldots+b_m}\right)=\\
=&\Coef_{t^k}\left(\frac{Q(t z_{j_1},\ldots,t z_{j_m};z_{j_1},\ldots,z_{j_m})}{t Z_{J^c}}\right)=\\
=&\frac{1}{Z_{J^c}}\Coef_{t^{k+1}}Q(t z_{j_1},\ldots,t z_{j_m};z_{j_1},\ldots,z_{j_m}).
\end{align*}
It remains to note that 
$$
Q(t z_{j_1},\ldots,t z_{j_m};z_{j_1},\ldots,z_{j_m})=\frac{\prod_{i\in J^c}\varsigma(z_j Z_{J^c}t)}{\varsigma(Z_{J^c})},
$$
which completes the proof of equality~\eqref{eq:formula2}.

\medskip

\section*{Declarations}

\subsection*{Conflict of interest statement} On behalf of all authors, the corresponding author states that there is no conflict of interest.

\medskip

\subsection*{Data availability} This manuscript has no associated data.

\medskip

\end{document}